\newtheorem{theorem}{Theorem}[section]
\newtheorem{prop}[theorem]{Proposition}
\newtheorem{lemma}[theorem]{Lemma}
\newtheorem{prop-def}{Proposition-Definition}[section]
\theoremstyle{definition}
\newtheorem{defn}[theorem]{Definition}
\newtheorem{exam}[theorem]{Example}
\newcommand{\nc}{\newcommand}
\nc{\delete}[1]{{}}
\nc{\mmargin}[1]{}
\nc{\mlabel}[1]{\label{#1}}  
\nc{\mcite}[1]{\cite{#1}}  
\nc{\mref}[1]{\ref{#1}}  
\nc{\meqref}[1]{\eqref{#1}} 
\nc{\mbibitem}[1]{\bibitem{#1}} 
\nc{\mlabel}[1]{\label{#1}  
		{\hfill \hspace{1cm}{\bf{{\ }\hfill(#1)}}}}
\nc{\mcite}[1]{\cite{#1}{{\bf{{\ }(#1)}}}}  
\nc{\mref}[1]{\ref{#1}{{\bf{{\ }(#1)}}}}  
\nc{\meqref}[1]{\eqref{#1}{{\bf{{\ }(#1)}}}} 
\nc{\mbibitem}[1]{\bibitem[\bf #1]{#1}} 
\nc{\lush}{dense\xspace}
 \font\cyrs=wncyr7
\nc{\tforall}{\text{ for all }}
\newcommand{\bk}{{\mathbf{k}}}
\nc{\vep}{\varepsilon}
\nc{\bin}[2]{ (_{\stackrel{\scs{#1}}{\scs{#2}}})}  
\nc{\binc}[2]{(\!\! \begin{array}{c} \scs{#1}\\
		\scs{#2} \end{array}\!\!)}  
\nc{\bincc}[2]{  ( {\scs{#1} \atop
		\vspace{-1cm}\scs{#2}} )}  
\nc{\oline}[1]{\overline{#1}}
\nc{\mapm}[1]{\lfloor\!|{#1}|\!\rfloor}
\nc{\bs}{\bar{S}}
\nc{\la}{\longrightarrow}
\nc{\ot}{\otimes}
\nc{\rar}{\rightarrow}
\nc{\dar}{\downarrow}
\nc{\dap}[1]{\downarrow \rlap{$\scriptstyle{#1}$}}
\nc{\defeq}{\stackrel{\rm def}{=}}
\nc{\dis}[1]{\displaystyle{#1}}
\nc{\dotcup}{\ \displaystyle{\bigcup^\bullet}\ }
\nc{\hcm}{\ \hat{,}\ }
\nc{\hts}{\hat{\otimes}}
\nc{\hcirc}{\hat{\circ}}
\nc{\lleft}{[}
\nc{\lright}{]}
\nc{\curlyl}{\left \{ \begin{array}{c} {} \\ {} \end{array}
	\right .  \!\!\!\!\!\!\!}
\nc{\curlyr}{ \!\!\!\!\!\!\!
	\left . \begin{array}{c} {} \\ {} \end{array}
	\right \} }
\nc{\longmid}{\left | \begin{array}{c} {} \\ {} \end{array}
	\right . \!\!\!\!\!\!\!}
\nc{\ora}[1]{\stackrel{#1}{\rar}}
\nc{\ola}[1]{\stackrel{#1}{\la}}
\nc{\scs}[1]{\scriptstyle{#1}} \nc{\mrm}[1]{{\rm #1}}
\nc{\dirlim}{\displaystyle{\lim_{\longrightarrow}}\,}
\nc{\invlim}{\displaystyle{\lim_{\longleftarrow}}\,}
\nc{\dislim}[1]{\displaystyle{\lim_{#1}}} \nc{\colim}{\mrm{colim}}
\nc{\mvp}{\vspace{0.3cm}} \nc{\tk}{^{(k)}} \nc{\tp}{^\prime}
\nc{\ttp}{^{\prime\prime}} \nc{\svp}{\vspace{2cm}}
\nc{\vp}{\vspace{8cm}}
\nc{\modg}[1]{\!<\!\!{#1}\!\!>}
\nc{\intg}[1]{F_C(#1)}
\nc{\lmodg}{\!<\!\!}
\nc{\rmodg}{\!\!>\!}
\nc{\wit}[1]{\widetilde{#1}}
\nc{\cpi}{\widehat{\Pi}}
\nc{\ssha}{{\mbox{\cyrs X}}} 
\nc{\tsha}{{\mbox{\cyrt X}}}
\nc{\shpr}{\diamond}    
\nc{\labs}{\mid\!}
\nc{\rabs}{\!\mid}
\nc{\dfop}{\odot}
\nc{\dfoa}{\dfop^{(1)}}
\nc{\dfob}{\dfop^{(2)}}
\nc{\dfoc}{\dfop^{(3)}}
\nc{\dfod}{\dfop^{(4)}}
\nc{\tstar}{\tilde{\star}}
\nc{\tdfop}{\tilde{\dfop}}
\nc{\tdfoa}{\tilde{\dfop}^{(1)}}
\nc{\tdfob}{\tilde{\dfop}^{(2)}}
\nc{\tdfoc}{\tilde{\dfop}^{(3)}}
\nc{\tdfod}{\tilde{\dfop}^{(4)}}
\nc{\ann}{\mrm{ann}}
\nc{\Aut}{\mrm{Aut}}
\nc{\br}{\mrm{bre}}
\nc{\can}{\mrm{can}}
\nc{\coh}{\mrm{coh}}
\nc{\Cont}{\mrm{Cont}}
\nc{\rchar}{\mrm{char}}
\nc{\cok}{\mrm{coker}}
\nc{\de}{\mrm{dep}}
\nc{\dtf}{{R-{\rm tf}}}
\nc{\dtor}{{R-{\rm tor}}}
\nc{\Div}{{\mrm Div}}
\nc{\End}{\mrm{End}}
\nc{\Ext}{\mrm{Ext}}
\nc{\Fil}{\mrm{Fil}}
\nc{\Fr}{\mrm{Fr}}
\nc{\Frob}{\mrm{Frob}}
\nc{\Gal}{\mrm{Gal}}
\nc{\GL}{\mrm{GL}}
\nc{\Hom}{\mrm{Hom}}
\nc{\hsr}{\mrm{H}}
\nc{\hpol}{\mrm{HP}}
\nc{\id}{\mrm{id}}
\nc{\im}{\mrm{im}}
\nc{\Id}{\mrm{Id}}
\nc{\ID}{\mrm{ID}}
\nc{\Irr}{\mrm{Irr}}
\nc{\incl}{\mrm{incl}}
\nc{\length}{\mrm{length}}
\nc{\Lie}{\mrm{Lie}}
\nc{\mchar}{\rm char}
\nc{\mpart}{\mrm{part}}
\nc{\ql}{{\QQ_\ell}}
\nc{\qp}{{\QQ_p}}
\nc{\rank}{\mrm{rank}}
\nc{\rcot}{\mrm{cot}}
\nc{\rdef}{\mrm{def}}
\nc{\rdiv}{{\rm div}}
\nc{\rtf}{{\rm tf}}
\nc{\rtor}{{\rm tor}}
\nc{\res}{\mrm{res}}
\nc{\SL}{\mrm{SL}}
\nc{\Spec}{\mrm{Spec}}
\nc{\tor}{\mrm{tor}}
\nc{\Tr}{\mrm{Tr}}
\nc{\tr}{\mrm{tr}}
\nc{\bfk}{{\bf k}}
\nc{\bfone}{{\bf 1}}
\nc{\bfzero}{{\bf 0}}
\nc{\detail}{\marginpar{\bf More detail}
	\noindent{\bf Need more detail!}
	\svp}
\nc{\Diff}{\mathbf{Diff}}
\nc{\gap}{\marginpar{\bf Incomplete}\noindent{\bf Incomplete!!}
	\svp}
\nc{\FMod}{\mathbf{FMod}}
\nc{\Int}{\mathbf{Int}}
\nc{\Mon}{\mathbf{Mon}}
\nc{\remarks}{\noindent{\bf Remarks: }}
\nc{\Rep}{\mathbf{Rep}}
\nc{\Rings}{\mathbf{Rings}}
\nc{\Sets}{\mathbf{Sets}}
\nc{\sbu}{{\scriptstyle \bullet}}
\nc{\BA}{{\mathbb A}}   \nc{\BB}{{\mathbb B}}
\nc{\CC}{{\mathbb C}}   \nc{\DD}{{\mathbb D}}
\nc{\EE}{{\mathbb E}}
\nc{\FF}{{\mathbb F}}   \nc{\GG}{{\mathbb G}}
\nc{\HH}{{\mathbb H}}   \nc{\LL}{{\mathbb L}}
\nc{\NN}{{\mathbb N}}   \nc{\PP}{{\mathbb P}}
\nc{\QQ}{{\mathbb Q}}   \nc{\RR}{{\mathbb R}}
\nc{\BS}{{\mathbb S}}   \nc{\TT}{{\mathbb T}}
\nc{\VV}{{\mathbb V}}   \nc{\ZZ}{{\mathbb Z}}
\nc{\TP}{\widetilde{P}}
\nc{\cala}{{\mathcal A}}    \nc{\calb}{{\mathcal B}}
\nc{\calc}{{\mathcal C}}    \nc{\cald}{\mathcal{D}}
\nc{\cale}{{\mathcal E}}    \nc{\calf}{{\mathcal F}}
\nc{\calg}{{\mathcal G}}    \nc{\calh}{{\mathcal H}}
\nc{\cali}{{\mathcal I}}    \nc{\call}{{\mathcal L}}
\nc{\calm}{{\mathcal M}}    \nc{\caln}{{\mathcal N}}
\nc{\calo}{{\mathcal O}}    \nc{\calp}{{\mathcal P}}
\nc{\calq}{{\mathcal Q}}    \nc{\calr}{{\mathcal R}}
\nc{\cals}{{\mathcal S}}    \nc{\calt}{{\mathcal T}}
\nc{\calv}{{\mathcal V}}    \nc{\calw}{{\mathcal W}}
\nc{\calx}{{\mathcal X}}    \nc{\caly}{{\mathcal Y}}
\nc{\fraka}{{\mathfrak a}}
\nc{\frakb}{\mathfrak{b}}
\nc{\frakg}{{\frak g}}
\nc{\frakB}{{\frak B}}
\nc{\frakm}{{\frak m}}
\nc{\frakM}{{\frak M}}
\nc{\frakp}{{\frak p}}
\nc{\frakX}{{\frak X}}
\nc{\frakS}{{\frak S}}
\nc{\frakA}{{\frak A}}
\nc{\frakC}{{\frak C}}
\nc{\frakx}{{\frakx}}
\nc{\ynr}[1]{\textcolor{blue}{\underline{Yunnan:}#1 }}
\nc{\lir}[1]{\textcolor{red}{\underline{Li:}#1 }}
\begin{document}

\title[Unit action on braided operads and Hopf algebras]{Coherent unit actions on braided operads and Hopf algebras}

\author[Li Guo]{Li Guo}
\address{Department of Mathematics and Computer Science, Rutgers University, Newark, NJ 07102, USA}
\email{liguo@rutgers.edu}

\author[Yunnan Li]{Yunnan Li}
\address{School of Mathematics and Information Science, Guangzhou University, Waihuan Road West 230, Guangzhou 510006, China}
\email{ynli@gzhu.edu.cn}

\date{\today}

\begin{abstract}
The notion of a coherent unit action on algebraic operads was first introduced by Loday for binary quadratic nonsymmetric operads and generalized by Holtkamp, to ensure that the free objects of the operads carry a Hopf algebra structure. There was also a classification of such operads in the binary quadratic nonsymmetric case. We generalize the notion of coherent unit action to braided operads and show that free objects of braided operads with such an action carries a braided Hopf algebra structure. Under the conditions of binary, quadratic and nonsymmetric, we give a characterization and classification of the braided operads that allow a coherent unit action and thus carry a braided Hopf algebra structure on their free objects.
\end{abstract}

\subjclass[2010]{16T05, 16T25, 16W99, 05C05}

\keywords{operad, unit action, braided Hopf algebra, braided dendriform algebra, braided tridendriform algebra}

\maketitle

\tableofcontents

\allowdisplaybreaks

\section{Introduction}

This paper explores braided Hopf algebra structures on free objects from braided operads by the method of coherent unit actions.

As the importance of Hopf algebras continued to show in more and more areas of mathematics, ad hoc instances of Hopf algebras sometimes turned out to be just special cases of a general construction.
One such general construction is the free objects of various (nonassociative) algebras (operads). The first instance is the dendriform algebra introduced by Loday~\mcite{Lo1} who showed that, even though a dendriform algebra is not an associative algebra, free dendriform algebras has a natural Hopf algebra structure\mcite{LR1}. This Hopf algebra is realized on planar binary trees and was found to be isomorphic to the noncommutative analog (by Foissy and Holtkamp~\mcite{Fo1,Hol1}) of the Hopf algebra of rooted trees in the Connes-Kreimer theory of renormalization of quantum field theory~\mcite{CK}. After Hopf algebra structures were discovered for several related algebraic structures, such as tridendriform algebra and quadri-algebras~\mcite{AL,LR2}, Loday showed in~\mcite{Lo1} that a nonsymmetric (also called regular) operad with a so-called coherent unit action endows a natural Hopf algebra structure on its free objects. Similar results hold for the corresponding algebraic structures with certain commutativity~\mcite{Lo2,Lo3}. This uniform approach not only included as special cases many Hopf algebras obtained case by case during that period of time, it also recovered the classical Hopf algebra structures on the shuffle product algebra and quasi-shuffle algebra, as the algebras are shown to be the free objects of commutative dendriform and tridendriform algebras. This work was further generalized to nonsymmetric operads by Holtkamp~\mcite{Hol2}. Given the great interest in such Hopf algebras, a characterization and classification were achieved for binary quadratic nonsymmetric operads which have a coherent unit action, and hence allow for Hopf algebra structures on their free objects~\mcite{EG}.

Operads have been extended to various broader contexts. We are particularly interested in braided operads~\mcite{Fi,Fr} for their connection with Yang-Baxter equations and quantum theory\mcite{Ka}. In particular, in this context quantum shuffle algebras, quantum quasi-shuffle algebras and generally quantum multi-brace algebras have been obtained by Rosso, Jian-Rosso-Zhang and Jian-Rosso~\mcite{Ro,JRZ,Ji,JR} respectively. Recently, with motivation from braided construction of rooted trees from the work of Connes-Kreimer on renormalization of quantum field theory, braided structures for dendriform algebras and tridendriform algebras have attracted attention~\mcite{Fo2,GL,GL0}. There again the free objects can be equipped with braided Hopf algebra structures.

Thus it appears to be the time to provide a uniform approach in the context of braided operads, in the direction of coherent unit actions on algebraic operads developed by Loday and Holtkamp~\mcite{Lo2,Hol2} as noted above.
In particular, it is interesting to study the coherent unit actions such that the free objects of the braided operads are braided Hopf algebras. This is the purpose of this article. We also extend the classification of coherent unit actions on binary quadratic nonsymmetric operads in~\mcite{EG} to the braided context, thereby determining such braided operads that give rise to braided Hopf algebras from their free objects.

Here is the layout of the paper. In Section~\mref{sec:bra}, we provide background on braided operads with some details on the dendriform and tridendriform cases. In Section~\mref{sec:coh}, we extend the notion of coherent unit actions on nonsymmetric operads to braided operads and show that such braided operads have a braided Hopf algebra structure on their free objects. We consider the two cases of braided nonsymmetric and completely commutative operads, so that we can cover for instance both the braided dendriform operad and braided commutative dendriform (Zinbiel) operad. In Section~\mref{sec:clas}, we give a classification of braided binary quadratic nonsymmetric operads with coherent unit actions.

\smallskip

\noindent
{\bf Notations. } In this paper, we fix a ground field $\bk$ of characteristic 0. All the objects
under discussion, including vector spaces, algebras and tensor products, are taken over $\bk$ by default.
Also denote $\BS_n$ (resp. $\BB_n$) the $n$-th symmetric group (resp. braid group) for any $n\geq0$ with $\BS_0=\emptyset$ (resp. $\BB_0=\emptyset$).
For the convenience of discussion, we also fix a braided tensor category $\calc$ over $\bk$ to involve all the forthcoming braided objects.

\section{Braided algebraic operads}
\mlabel{sec:bra}
We start with a basic notion.
\begin{defn}
A {\bf braided vector space} is a vector space $V$ together with a linear operator $\sigma$ on $V^{\otimes2}$, called the {\bf Yang-Baxter operator} characterized by the equation
\begin{equation}
(\sigma\otimes
\id_{V})(\id_{V}\otimes \sigma)(\sigma\otimes
\id_{V})=(\id_{V}\otimes \sigma)(\sigma\otimes
\id_{V})(\id_{V}\otimes \sigma).
\mlabel{eq:qybe}
\end{equation}
For any braided vector space $(V,\sigma)$ and $n\geq1$, the tensor space $V^{\otimes n}$ becomes a representation of $\BB_n$ with its usual generators $b_i$ acting as $\sigma_i:=\id^{\otimes(i-1)}\otimes\sigma\otimes\id^{\otimes(n-1-i)}$ for $1\leq i\leq n-1$. For braided vector spaces $(V,\sigma),(V',\sigma')$, a linear map $f:V\rar V'$ is called a {\bf homomorphism of braided vector spaces} if $(f\otimes f)\sigma=\sigma'(f\otimes f)$.
Let {\bf VS} (resp. {\bf BV}) denote the category of (braided) vector spaces.
\end{defn}

Let $\pi_n:\BB_n\rar\BS_n$ be the natural projection mapping $b_i$ to transpositions $s_i,\,i=1,\dots,n-1$.
It is well known that there exists an injective map $\iota_n:\BS_n\rar\BB_n$, satisfying $\pi_n\iota_n=\id_{\BS_n}$ and called the {\bf Matsumoto-Tits section}. If $w=s_{i_1}\cdots s_{i_r}$ is
any reduced expression of $w\in\BS_n$, then $\iota_n(w)=b_{i_1}\cdots b_{i_r}$, which is uniquely determined by $w$. Given any braided vector space $(V,\sigma)$, we denote the action of $\iota_n(w)$ on $V^{\otimes n}$ by $T^\sigma_w:=\sigma_{i_1}\cdots
\sigma_{i_r}$. In particular, for the usual flip map $\tau$ of $V$, we have
\[T^\tau_w(v_1\otimes\cdots\otimes v_n)=v_{w^{-1}(1)}\otimes\cdots\otimes v_{w^{-1}(n)}.\]

\begin{defn}
	Let $A$ be an algebra with product $\mu$, and $\sigma$ be a braiding on $A$. We call the triple $(A,\mu,\sigma)$ a {\bf braided algebra} if it satisfies the conditions
	\begin{equation}\mlabel{ba1}
	 (\mu\otimes\Id_A)\sigma_2\sigma_1=\sigma(\Id_A\otimes\mu),\quad
	(\Id_A\otimes\mu)\sigma_1\sigma_2=\sigma(\mu\otimes
	\Id_A).
	\end{equation}
	Moreover, if $A$ is unital with unit $1_A$ and satisfies
	\begin{equation}\mlabel{ba2}
	\sigma(a\otimes 1_A)=1_A\otimes a,\quad
	\sigma(1_A\otimes a)=a\otimes1_A \quad \text{for all } a\in A, \end{equation}
	then $A$ is called a {\bf unital braided algebra}.

Dually, we call $(C,\Delta,\varepsilon,\sigma)$ a {\bf braided coalgebra} if $(C,\Delta,\varepsilon)$ is a coalgebra with braiding $\sigma$ and satisfies
	\begin{align}
	&\mlabel{bc1}
\sigma_1\sigma_2(\Delta\otimes\Id_C)=(\Id_C\otimes\Delta)\sigma,\,
	\sigma_2\sigma_1(\Id_C\otimes\Delta)=(\Delta\otimes
	\Id_C)\sigma,\\
    &\mlabel{bc2}
	 (\varepsilon\otimes\Id_C)\sigma=\Id_C=(\Id_C\otimes\varepsilon)\sigma.
	\end{align}
	
Also, a quintuple $(H,\mu,\Delta,\varepsilon,\sigma)$ is called a {\bf braided bialgebra}, if $(H,\mu,\sigma)$ (resp. $(H,\Delta,\varepsilon,\sigma)$) is a braided algebra (resp. coalgebra) such that
	\begin{equation}
	\Delta\,\mu=(\mu\otimes\mu)\sigma_2(\Delta\otimes \Delta).
	\mlabel{eq:comp}
	\end{equation}

When $H$ has unit $1_H$ and antipode $S:H\to H$ such that $\mu(S\otimes\Id_H)\Delta=\varepsilon 1_H=\mu(\Id_H\otimes S)\Delta$,
	then the septuple $(H,\mu,\Delta,\vep,S,\sigma)$ is a {\bf braided Hopf algebra}. Similar to the case of braided vector spaces, the homomorphisms for these braided objects are the usual homomorphisms commutating with braidings.
\end{defn}

The notion of braided operads was originally introduced by Fiedorowicz in \mcite{Fi}. Especially when the Yang-Baxter operators are chosen to be the flip maps, the framework recovers the usual definition of operads. For the study of algebraic operads and braided operads, we refer the reader to \cite{Fr,LV}.

\begin{defn}
A {\bf braided (algebraic) operad} over $\bk$ is an analytic functor $\calp:\rm{\bf BV}\rar\rm{\bf BV}$ such that
$\calp(0)=0$ and is equipped with an associative natural transformation of functors $\gamma:\calp\circ\calp\rar\calp$ and a unit $\eta:\id_{\rm{\bf BV}}\rar\calp$ such that the following diagrams commute:
\[\xymatrix@=2.5em{(\calp\circ\calp)\circ\calp\ar@{->}[r]^-{=}\ar@{->}[d]^-{\gamma\circ\id_\calp}&\calp\circ(\calp\circ\calp)\ar@{->}[r]^-{\id_\calp\circ\gamma}
&\calp\circ\calp\ar@{->}[d]^-\gamma\\
\calp\circ\calp\ar@{->}[rr]^-\gamma&&\calp}\,,\quad\,
\xymatrix@=2.5em{\id_{\rm{\bf BV}}\circ\calp\ar@{->}[r]^-{\eta\circ\id_\calp}\ar@{->}[dr]_{=}&\calp\circ\calp\ar@{->}[d]^-\gamma&\calp\circ\id_{\rm{\bf BV}}\ar@{->}[l]_{\id_\calp\circ\eta}\ar@{->}[dl]^-{=}\\
&\calp&}\,.\]
Namely,
$$\gamma_V\gamma_{\calp(V)}=\gamma_V\calp(\gamma_V), \quad  \gamma_V\eta_{\calp(V)}=\id_{\calp(V)}=\gamma_V\calp(\eta_V) \quad \text{for any } (V,\sigma)\in {\bf BV}.$$
For two braided operads $(\calp,\gamma_\calp,\eta_\calp),\,(\calq,\gamma_\calq,\eta_\calq)$, a natural transformation $\alpha:\calp\rar\calq$ is called a {\bf morphism of braided operads}, if $\gamma_\calq\circ(\alpha,\alpha)=\alpha\circ\gamma_\calp$ and $\eta_\calq=\alpha\circ\eta_\calp$.
\end{defn}

\begin{defn}
A {\bf $\BB$-module} over $\bk$ is a family
$M=\{M(n)\}_{n\geq0}$
of right $\BB_n$-modules $M(n)$. A morphism of $\BB$-modules $f:M\rar N$ is a family of homomorphisms of $\BB_n$-modules
$f_n:M(n)\rar N(n)$.
\end{defn}
Given a $\BB$-module $M$ and a braided vector space $(V,\sigma)$, we define a functor $\wit{M}:\text{\bf BV}\rar\text{\bf BV}$ by
\[\wit{M}(V):=\bigoplus_{n\geq0}M(n)\otimes_{\BB_n}V^{\otimes n},\]
whose Yang-Baxter operator, denoted by $\sigma_M$, is determined by the following equalities,
\begin{equation}\mlabel{eq:myb}
\sigma_M((\mu,u_1,\dots,u_i)\otimes(\nu,v_1,\dots,v_j)) :=\sum(\nu,v'_1,\dots,v'_j)
\otimes(\mu,u'_1,\dots,u'_i),
\end{equation}
for any $(\mu,u_1,\dots,u_i)\in M(i)\otimes_{\BB_i}V^{\otimes i}$ and $(\nu,v_1,\dots,v_j)\in M(j)\otimes_{\BB_j}V^{\otimes j}$, where we denote
\[T^\sigma_{\beta_{ij}}(u_1\otimes\cdots\otimes u_i\underline{\otimes} v_1\otimes\cdots\otimes v_j):=\sum v'_1\otimes\cdots\otimes v'_j\underline{\otimes} u'_1\otimes\cdots\otimes u'_i,\]
with $\beta_{ij}\in\BS_{i+j}$ such that $\beta_{ij}(k)=j+k$ if $1\leq k\leq i$, and $k-i$ if $i+1\leq k\leq i+j$.

Define the {\bf tensor product} of $\BB$-modules $M$ and $N$ to be the $\BB$-module $M\otimes N$ with
\[(M\otimes N)(n):=\bigoplus_{i+j=n}\text{Ind\,}_{\BB_i\times\BB_j}^{\BB_n} M(i)\otimes N(j)=
\bigoplus_{i+j=n}(M(i)\otimes N(j))\otimes_{\BB_i\times\BB_j}\bk[\BB_n],\,n\geq0.\]

Define the {\bf composite} of $\BB$-modules $M$ and $N$ to be the $\BB$-module
\[M\circ N:=\bigoplus_{k\geq0}M(k)\otimes_{\BB_k} N^{\otimes k}.\]
More precisely,
$$(M\circ N)(n):=\bigoplus_{k\geq0}M(k)\otimes_{\BB_k}
\left(\bigoplus_{i_1+\cdots+i_k=n}\text{Ind\,}_{\BB_{i_1}\times\cdots\times\BB_{i_k}}^{\BB_n}
N(i_1)\otimes\cdots\otimes N(i_k)\right),\,n\geq0,$$
where the left module action of $\BB_k$ on $N^{\otimes k}$ is defined as follows. Any braid $b\in\BB_k$ sends
\[(\mu_1,\dots,\mu_k,c)\in
(N(i_1)\otimes\cdots\otimes N(i_k))\otimes_{\BB_{i_1}\times\cdots\times\BB_{i_k}}\bk[\BB_n]\]
to
\[(\mu_{b^{-1}(1)},\dots,\mu_{b^{-1}(k)},b(i_1,\dots,i_k)c)\in
(N(i_{b^{-1}(1)})\otimes\cdots\otimes N(i_{b^{-1}(k)}))
\otimes_{\BB_{i_{b^{-1}(1)}}\times\cdots\times\BB_{i_{b^{-1}(k)}}}\bk[\BB_n],\]
where $b\in\BB_k$ acts on $\{1,\dots,k\}$ by permutations via the natural projection $\pi_k$, and $b(i_1,\dots,i_k)$ is the braid obtained from $b$ by replacing its $j$-th strand with $i_j$ parallel strands for any $j=1,\dots,k$.

\begin{prop}
For any $\BB$-modules $M,\,N$ and braided vector space $V$, we have
\[(\wit{M\otimes N})(V)=\wit{M}(V)\otimes\wit{N}(V),\,
(\wit{M\circ N})(V)=\wit{M}(\wit{N}(V)).\]
\end{prop}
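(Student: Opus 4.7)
The plan is to verify both identities levelwise, reducing them to the standard induction-coinvariants adjunction for group algebras and then checking compatibility of the Yang-Baxter actions.

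For the first identity, I would unfold the definition
\[(\wit{M\ot N})(V)=\bigoplus_{i,j\geq0}\big((M(i)\ot N(j))\ot_{\BB_i\times\BB_j}\bk[\BB_{i+j}]\big)\ot_{\BB_{i+j}}V^{\ot(i+j)}\]
and apply the standard identification $\bk[\BB_{i+j}]\ot_{\BB_{i+j}}V^{\ot(i+j)}\cong V^{\ot(i+j)}$ of $(\BB_i\times\BB_j)$-modules (the subgroup acting through the block embedding $\BB_i\times\BB_j\hookrightarrow\BB_{i+j}$) to reduce this to $\bigoplus_{i,j}(M(i)\ot N(j))\ot_{\BB_i\times\BB_j}V^{\ot(i+j)}$. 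Since the block embedding makes $V^{\ot(i+j)}=V^{\ot i}\ot V^{\ot j}$ with $\BB_i$ and $\BB_j$ acting independently on the respective factors, the coinvariant splits as $(M(i)\ot_{\BB_i}V^{\ot i})\ot(N(j)\ot_{\BB_j}V^{\ot j})$, and summing over $i,j$ gives $\wit{M}(V)\ot\wit{N}(V)$.

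For the second identity, I would repeat the same analysis with $k$ factors instead of two inside the outer $\bigoplus_k M(k)\ot_{\BB_k}(-)$, obtaining
\[(\wit{M\circ N})(V)=\bigoplus_{k\geq0}M(k)\ot_{\BB_k}\wit{N}(V)^{\ot k}.\]
This equals $\wit{M}(\wit{N}(V))$ provided that the $\BB_k$-action inherited from the strand-replacement rule $b\mapsto b(i_1,\dots,i_k)$ agrees with the $\BB_k$-action on $\wit{N}(V)^{\ot k}$ induced by the Yang-Baxter operator $\sigma_N$ of \meqref{eq:myb}. Confirming this compatibility is the main obstacle, and I would verify it on the generators $b_\ell\in\BB_k$. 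By construction, $b_\ell(i_1,\dots,i_k)$ is the lift in $\BB_n$ of the block transposition $\beta_{i_\ell,i_{\ell+1}}$ acting between the $\ell$-th and $(\ell+1)$-st consecutive groups of strands, with the remaining blocks kept in parallel, so its action on $V^{\ot n}$ is $\id^{\ot(i_1+\cdots+i_{\ell-1})}\ot T^\sigma_{\beta_{i_\ell,i_{\ell+1}}}\ot\id^{\ot(i_{\ell+2}+\cdots+i_k)}$. This is precisely how $\sigma_N$ acts between the $\ell$-th and $(\ell+1)$-st tensor factors of $\wit{N}(V)^{\ot k}$ by the defining formula \meqref{eq:myb}. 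Once the match is verified on generators, full $\BB_k$-equivariance follows from the braid relations, and passing to coinvariants over $\BB_k$ yields the desired equality.
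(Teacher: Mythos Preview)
Your proposal is correct and follows essentially the same approach as the paper: both proofs unfold the definitions, collapse the induced module against the $\BB_n$-coinvariants, and then split over the product subgroup. The only difference is that you explicitly verify on generators $b_\ell$ the compatibility between the strand-replacement $\BB_k$-action and the Yang-Baxter action via $\sigma_N$, whereas the paper simply asserts this step (``the fifth equality is based on the representation $\wit{N}(V)^{\otimes k}$ of $\BB_k$ via the Yang-Baxter operator $\sigma_N$''); your extra detail is a strict improvement in clarity without any change of strategy.
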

\begin{proof}
By the definition of $\wit{M}$ and $M\otimes N$, we find
\begin{align*}
(\wit{M\otimes N})(V)&=\bigoplus_{n\geq0}(M\otimes N)(n)\otimes_{\BB_n}V^{\otimes n}\\
&=\bigoplus_{n\geq0}\left(\bigoplus_{i+j=n}(M(i)\otimes N(j))\otimes_{\BB_i\times\BB_j}\bk[\BB_n]\right)\otimes_{\BB_n}V^{\otimes n}\\
&=\bigoplus_{i,j\geq0}(M(i)\otimes N(j))\otimes_{\BB_i\times\BB_j}V^{\otimes(i+j)}\\
&=\left(\bigoplus_{i\geq0}M(i)\otimes_{\BB_i}V^{\otimes i}\right)\otimes\left(\bigoplus_{j\geq0}N(j)\otimes_{\BB_j}V^{\otimes j}\right)\\
&=\wit{M}(V)\otimes\wit{N}(V).
\end{align*}
On the other hand, by the definition of $\wit{M}$ and $M\circ N$, we have
\begin{align*}
(\wit{M\circ N})(V)&=\bigoplus_{n\geq0}(M\circ N)(n)\otimes_{\BB_n}V^{\otimes n}\\
&=\bigoplus_{n\geq0}\left(\bigoplus_{k\geq0}M(k)\otimes_{\BB_k}\left(\bigoplus_{i_1+\cdots+i_k=n}(N(i_1)\otimes\cdots\otimes N(i_k))\otimes_{\BB_{i_1}\times\cdots\times\BB_{i_k}}\bk[\BB_n]\right)\right)\otimes_{\BB_n}V^{\otimes n}\\
&=\bigoplus_{k\geq0}M(k)\otimes_{\BB_k}\left(\bigoplus_{i_1,\dots,i_k\geq0}(N(i_1)\otimes\cdots\otimes N(i_k))\otimes_{\BB_{i_1}\times\cdots\times\BB_{i_k}}V^{\otimes(i_1+\cdots+i_k)}\right)\\
&=\bigoplus_{k\geq0}M(k)\otimes_{\BB_k}\left(\left(\bigoplus_{i_1\geq0}N(i_1)\otimes_{\BB_{i_1}}V^{\otimes i_1}\right)\otimes\cdots\otimes
\left(\bigoplus_{i_k\geq0}N(i_k)\otimes_{\BB_{i_k}}V^{\otimes i_k}\right)\right)\\
&=\bigoplus_{k\geq0}M(k)\otimes_{\BB_k}\wit{N}(V)^{\otimes k}\\
&=\wit{M}(\wit{N}(V)),
\end{align*}
where the third equality is due to the fact that the left action of $\BB_k$ on $\text{Ind\,}_{\BB_{i_1}\times\cdots\times\BB_{i_k}}^{\BB_n}
N(i_1)\otimes\cdots\otimes N(i_k)$ commutes with the right one of $\BB_n$, and the fifth equality is based on the representation $\wit{N}(V)^{\otimes k}$ of $\BB_k$ via the Yang-Baxter operator $\sigma_N$ of $\wit{N}(V)$.
\end{proof}

\medskip
According to \cite[Definition~3.2]{Fi} or the construction in \cite[\S 5.1]{Fr}, a braided operad $\calp$ consists of a
$\BB$-module $\{\calp(n)\}_{n\geq0}$,
with the composition maps $$\gamma(i_1,\dots,i_k):\calp(k)\otimes\calp(i_1)\otimes\cdots\otimes\calp(i_k)
\rar\calp(i_1+\cdots+i_k)$$
and the unit map $\eta$ satisfying
the associativity and the unity conditions, and also the following equivalence conditions:
\begin{align*}
&\gamma(\mu b;\mu_1,\dots,\mu_k)=\gamma(\mu;\mu_{b^{-1}(1)},\dots,\mu_{b^{-1}(k)})b(i_1,\dots,i_k),\\
&\gamma(\mu;\mu_1b_1,\dots,\mu_kb_k)=\gamma(\mu;\mu_1,\dots,\mu_k)(b_1\times\cdots\times b_k),
\end{align*}
for any $\mu\in\calp(k),\mu_1\in\calp(i_1),\dots,\mu_k\in\calp(i_k)$, where $b\in\BB_k$ acts on $\{1,\dots,k\}$ by permutations via the natural projection $\pi_k$, $b(i_1,\dots,i_k)$ is the braid obtained from $b$ by replacing its $j$-th strand with $i_j$ parallel strands for any $j=1,\dots,k$, and $b_1\times\cdots\times b_k$ denotes the direct product of braids $b_1,\dots,b_k$.
It is easy to see that $\gamma(i_1,\dots,i_k)$ factors through
\[\calp(k)\otimes_{\BB_k}\text{Ind\,}_{\BB_{i_1}\times\cdots\times\BB_{i_k}}^{\BB_{i_1+\cdots+i_k}}
\calp(i_1)\otimes\cdots\otimes\calp(i_k)=\calp(k)\otimes_{\BB_k}(\calp(i_1)\otimes\cdots\otimes\calp(i_k))\otimes_{\BB_{i_1}\times\cdots\times\BB_{i_k}}\bk[\BB_{i_1+\cdots+i_k}]\]
by the homomorphism of $\BB_{i_1+\cdots+i_k}$-modules sending any
$\mu\otimes_{\BB_k}(\mu_1\otimes\cdots\otimes\mu_k)\otimes_{\BB_{i_1}\times\cdots\times\BB_{i_k}}b$ to $\gamma(\mu;\mu_1,\dots,\mu_k)b$,
as we have $b(i_1,\dots,i_k)(b_1\times\cdots\times b_k)=(b_{b^{-1}(1)}\times\cdots\times b_{b^{-1}(k)})b(i_1,\dots,i_k)$.

Given any $(V,\sigma)\in${\bf BV}, we have
\[\calp(V):=\bigoplus_{n\geq0}\calp(n)\otimes_{\BB_n}V^{\otimes n},\]
with its Yang-Baxter operator $\sigma_\calp$ defined as in Eq.~\meqref{eq:myb}. If $\calp $ is {\bf nonsymmetric} (also called {\bf regular}), then $\calp(n)=\calp_n\otimes\bk[\BB_n],\,n\geq0$, where $\bigoplus_{n\geq0}\calp_n$ is a graded vector space. Hence, $\calp(V)=\bigoplus_{n\geq0}\calp_n\otimes V^{\otimes n}$.

\begin{defn}
Given any (braided) operad $\calp$, a (braided) vector space $A$ is called an {\bf algebra over $\calp$} or {\bf $\calp$-algebra}, if it is equipped with a homomorphism $\theta_A:\calp(A)\rar A$ of (braided) vector spaces such that the following diagrams commute:
\[\xymatrix@=2.5em{(\calp\circ\calp)(A)\ar@{->}[r]^-{=}\ar@{->}[d]^-{\gamma_A}&\calp(\calp(A))\ar@{->}[r]^-{\theta_{\calp(A)}}
&\calp(A)\ar@{->}[d]^-{\theta_A}\\
\calp(A)\ar@{->}[rr]^-{\theta_A}&&A}\,,\quad\,
\xymatrix@=2.5em{A\ar@{->}[r]^-{\eta_A}\ar@{->}[rd]_{=}&\calp(A)\ar@{->}[d]^-{\theta_A}\\
&A}\,.\]
For two $\calp$-algebras $A$ and $B$, a homomorphism $\varphi:A\rar B$ of (braided) vector spaces is called a {\bf morphism of $\calp$-algebras} if $\varphi\theta_A=\theta_B\calp(\varphi)$.

In particular, the {\bf free $\calp$-algebra} generated by a (braided) vector space $V$ is given by $(\calp(V),\gamma_V)$ with $\eta_V:V\rar\calp(V)$, such that
for any $\calp$-algebra $A$ and homomorphism $f:V\rar A$ of (braided) vector spaces, there exists a unique morphism $\tilde{f}:\calp(V)\rar A$ of $\calp$-algebras satisfying $f=\tilde{f}\eta_V$.
\end{defn}

\begin{exam}\mlabel{ex:den}
A dendriform algebra $D$ is a vector space with two binary operators $\prec,\succ$ such that
\begin{align*}
&(a\prec b)\prec c=a\prec(b\prec c+b\succ c),\\
&(a\succ b)\prec c=a\succ(b\prec c),\\
&a\succ(b\succ c)=(a\prec b+a\succ b)\succ c,
\end{align*}
for any $a,b,c\in D$. Intrinsically, any dendriform algebra is
an algebra over the nonsymmetric operad $(\calp_\cald,\gamma,I)$ generated by
\[\calp_{\cald,1}=\bk I,\,\calp_{\cald,2}=\bk\prec\oplus\,\bk\succ,\]
with relations
\begin{align*}
&\gamma(\prec;\prec,I)=\gamma(\prec;I,\prec+\succ),\\
&\gamma(\prec;\succ,I)=\gamma(\succ;I,\prec),\\
&\gamma(\succ;I,\succ)=\gamma(\succ;\prec+\succ,I),\\
&\gamma(\prec;I,I)=\prec,\,\gamma(\succ;I,I)=\succ,\\
&\gamma(I;\prec)=\prec,\,\gamma(I;\succ)=\succ.
\end{align*}

As a braided analogue, we interpret the braided dendriform algebras in \mcite{GL} as
the braided $\calp_\cald$-algebras. A {\bf braided dendriform algebra} $(D,\prec,\succ,\sigma)$
is a braided vector space $(D,\sigma)$ endowed with the dendriform algebra structure $(\prec,\succ)$ such that
\begin{align}\mlabel{bda1}
\sigma(\Id_D\otimes\prec)=(\prec\otimes\Id_D)\sigma_2\sigma_1,\quad
\sigma(\prec\otimes\Id_D)=(\Id_D\otimes\prec)\sigma_1\sigma_2,\\
\mlabel{bda2}
\sigma(\Id_D\otimes\succ)=(\succ\otimes\Id_D)\sigma_2\sigma_1,\quad
\sigma(\succ\otimes\Id_D)=(\Id_D\otimes\succ)\sigma_1\sigma_2.
\end{align}
Since the map $\theta_D:\calp_\cald(D)\rar D$ is a homomorphism of braided vector spaces, we clearly have
\[(\theta_D\otimes\theta_D)\sigma_{\calp_\cald}=\sigma(\theta_D\otimes\theta_D).\]
Then for any $(I,a)\in\calp_{\cald,1}\otimes D$ and $(\prec,b,c)\in\calp_{\cald,2}\otimes D^{\otimes 2}$, we have
\[(\theta_D\otimes\theta_D)\sigma_{\calp_\cald}((I,a)\otimes(\prec,b,c))
=(\theta_D\otimes\theta_D) \left(\sum(\prec,b',c')\otimes(I,a')\right)
=\sum(b'\prec c')\otimes a',\]
where we set $T_{\beta_{12}}^\sigma(a\otimes b\otimes c)=\sigma_2\sigma_1(a\otimes b\otimes c)=\sum b'\otimes c'\otimes a'$.
On the other hand,
\[\sigma(\theta_D\otimes\theta_D)((I,a)\otimes(\prec,b,c))=\sigma(a\otimes(b\prec c)).\]
Hence,
\[\sigma(a\otimes(b\prec c))=\sum(b'\prec c')\otimes a',\]
that is, $\sigma(\Id_D\otimes\prec)=(\prec\otimes\Id_D)\sigma_2\sigma_1$.
The other conditions in Eq.~\meqref{bda1}, \meqref{bda2} can be verified similarly.

In particular, the free $\calp_\cald$-algebra over a braided vector space $(V,\sigma)$ is the free braided dendriform algebra, realized as the braided analogue of the Loday-Ronco algebra of planar binary rooted trees; see \cite[Theorem 2.8]{GL0}.
\end{exam}

\begin{exam}\mlabel{ex:triden}
A tridendriform algebra $T$ is a vector space with three binary operators $\prec,\succ,*$ such that
\begin{align*}
&(a\prec b)\prec c= a\prec(b\prec c+b\succ c+\,b* c),\\
&(a\succ b)\prec c= a\succ (b \prec c),\\
&a\succ (b\succ c)=(a\prec b+a\succ b+\,a* b)\succ c,\\
&(a* b)\prec c= a* (b\prec c),\\
&(a\prec b)* c= a* (b\succ c),\\
&(a\succ b)* c= a\succ(b * c),\\
&(a* b)* c= a*(b* c),
\end{align*}
for $a,b,c\in T$. Intrinsically, any tridendriform algebra is
an algebra over the nonsymmetric operad $(\calp_\calt,\gamma,I)$ generated by
\[\calp_{\calt,1}=\bk I,\,\calp_{\calt,2}=\bk\prec\oplus\,\bk\succ\oplus\,\bk\,*,\]
with relations
\begin{align*}
&\gamma(\prec;\prec,I)=\gamma(\prec;I,\prec+\succ+\,*),\\
&\gamma(\prec;\succ,I)=\gamma(\succ;I,\prec),\\
&\gamma(\succ;I,\succ)=\gamma(\succ;\prec+\succ+\,*,I),\\
&\gamma(\prec;*,I)=\gamma(*;I,\prec),\\
&\gamma(*;\prec,I)=\gamma(*;I,\succ),\\
&\gamma(*;\succ,I)=\gamma(\succ;I,*),\\
&\gamma(*;*,I)=\gamma(*;I,*),\\
&\gamma(\prec;I,I)=\prec,\,\gamma(\succ;I,I)=\succ,\,\gamma(*;I,I)=*,\\
&\gamma(I;\prec)=\prec,\,\gamma(I;\succ)=\succ,\,\gamma(I;*)=*.
\end{align*}

A {\bf braided tridendriform algebra}~\mcite{GL0}, denoted $(T,\prec,\succ,*,\sigma)$
is a braided vector space $(T,\sigma)$ endowed with a tridendriform algebra structure $(\prec,\succ,*)$ such that
\begin{align}\mlabel{bta1}
\sigma(\Id_T\otimes\prec)=(\prec\otimes\Id_T)\sigma_2\sigma_1,\quad
\sigma(\prec\otimes\Id_T)=(\Id_T\otimes\prec)\sigma_1\sigma_2,\\
\mlabel{bta2}
\sigma(\Id_T\otimes\succ)=(\succ\otimes\Id_T)\sigma_2\sigma_1,\quad
\sigma(\succ\otimes\Id_T)=(\Id_T\otimes\succ)\sigma_1\sigma_2,\\
\mlabel{bta3}
\sigma(\Id_T\otimes*)=(*\otimes\Id_T)\sigma_2\sigma_1,\quad
\sigma(*\otimes\Id_T)=(\Id_T\otimes*)\sigma_1\sigma_2.
\end{align}
Then the braided tridendriform algebras are exactly
the braided $\calp_\calt$-algebras, as the conditions in \meqref{bta1}--\meqref{bta3} can be recovered by the fact that the map $\theta_T:\calp_\calt(T)\rar T$ is a homomorphism of braided vector spaces. In particular, the free $\calp_\calt$-algebra over a braided vector space $(V,\sigma)$ is the free braided tridendriform algebra, constructed as the braided analogue of the Loday-Ronco algebra of planar rooted trees; see \cite[Theorem 4.5]{GL0}.
\end{exam}

\section{Coherent unit actions and braided $\calp$-Hopf algebras}
\mlabel{sec:coh}
Let $\calp$ be any (braided) operad such that $\calp(0)=0$ and $\calp(1)=\bk=\bk I$. Let $\cali$ be a 0-ary element adjoined to $\calp$ by
\[\calp'(i):=\begin{cases}
\calp(i),&i\geq1,\\
\bk\cali,&i=0.
\end{cases}\]
A {\bf unit action} on the operad $(\calp,\gamma,I)$ is a partial extension of the composition map $\gamma$ on $\calp'$ with
\[\gamma(i_1,\dots,i_k):\calp'(k)\otimes_{\BB_k}\text{Ind\,}_{\BB_{i_1}\times\cdots\times\BB_{i_k}}^{\BB_{i_1+\cdots+i_k}}
\calp'(i_1)\otimes\cdots\otimes\calp'(i_k)\rar\calp'(i_1+\cdots+i_k)\]
defined for all $i_j\geq0$ with $j=1,\dots,k$ and $i_1+\cdots+i_k>0$ if $k\geq2$. Note that $\gamma(i_1,\cdots,i_k)$ is not defined when $i_1=\cdots=i_k=0$ and $k\geq 2$.

In particular, for
$\gamma(1,0):\calp_2\otimes\bk I\otimes \bk\cali\rar\bk I$ and $\gamma(0,1):\calp_2\otimes \bk\cali\otimes\bk I\rar\bk I$ in the nonsymmetric case,
there exist linear maps $\alpha,\beta:\calp_2\rar\bk$ such that
\begin{equation}\mlabel{eq:ua}
\gamma(\mu;I,\cali)=\alpha(\mu)I,\,\gamma(\mu;\cali,I) =\beta(\mu)I \quad \text{ for any }\mu\in\calp_2.
\end{equation}

Given a $\calp$-algebra $A$, we define $A^+:=\bk\oplus A$, called a {\bf unitary} $\calp$-algebra, with structure map $\theta_{A^+}:\calp'(A^+)\rar A^+$ extending $\theta_A$ by sending $\cali\in\calp'(0)$ to $1\in A^+$.
\begin{defn}
For any (braided) nonsymmetric operad $(\calp,\gamma,I)$ generated by operation sets $M_k\subseteq\calp_k$, $k\geq2$, with a unit action, suppose that there are operations $\star_n\in \calp_n$ for all $n\geq0$ satisfying
\begin{align}
&\mlabel{eq:starn}\gamma(\star_n;I,\dots,\stackrel{i\text{ th}}{\vspace{3mm}\cali},\dots,I)=\star_{n-1},\,1\leq i\leq n,\\
&\mlabel{eq:star2}\gamma(\star_2;\cali,I)=\gamma(\star_2;I,\cali)=I,
\end{align}
particularly $\star_0=\cali,\,\star_1=I$ and $\alpha(\star_2)=\beta(\star_2)=1$. Then one can further extend $\gamma$ by requiring
\[\gamma(\star_n;\overbrace{\cali,\dots,\cali}^{n\text{ times}})=\cali.\]
In this case, such a unit action is called {\bf coherent}, if for any two $\calp$-algebras $A$ and $B$,
\[A\boxtimes B:=(A\otimes\bk)\oplus(\bk\otimes B)\oplus(A\otimes B)\]
is again a $\calp$-algebra defined as follows. Let $c$ be the braiding in the braided tensor category $\calc$; see \cite[Ch. XIII]{Ka}.
We let
$$\sigma:=\sigma_{A,B}:=c_{A^+\oplus B^+,\,A^+\oplus B^+}$$ denote the corresponding braiding on $A^+\oplus B^+$ when $A,B\in\calc$.
For any $p\in M_n$, $a_i\in A^+$ and $b_i\in B^+,\,1\leq i\leq n$ with $n\geq2$, let
\begin{equation}\mlabel{eq:tpp-alg}
p(a_1\otimes b_1,\dots,a_n\otimes b_n):=\begin{cases}
\sum\star_n(a'_1,\dots,a'_n)\otimes p(b'_1,\dots,b'_n),&\text{if at least one }b_i\in B,\\
p(a_1,\dots,a_n)\otimes 1,&\text{if all }b_i=1,\,p(a_1,\dots,a_n)\text{ is defined},\\
\text{undefined},&\text{otherwise},
\end{cases}
\end{equation}
where we denote $T^\sigma_{w_n}(a_1\otimes b_1\underline{\otimes}\cdots\underline{\otimes} a_n\otimes b_n)=\sum a'_1\otimes \cdots\otimes a'_n\otimes b'_1\otimes \cdots\otimes b'_n$ with $w_n\in\BS_{2n}$ such that $w_n(2i-1)=i,\,w_n(2i)=n+i$ for $i=1,\dots,n$. Then Eq.~\meqref{eq:tpp-alg} actually defines the structure map $\theta_{A\boxtimes B}$ of $A\boxtimes B$ as a $\calp$-algebra.
\end{defn}

\begin{lemma}\mlabel{le:ass}
For a unit action on a braided operad $(\calp,\gamma,I)$, let $\star_0:=\cali,\,\star_1:=I$, and $\star_2\in\calp_2$ satisfying Eq.~\meqref{eq:star2}. If $\star_2$ is associative, i.e. $\gamma(\star_2;\star_2,I)=\gamma(\star_2;I,\star_2)$, then one can recursively define a sequence of operations $\star_n,\,n\geq0$, by
\[\star_n:=\gamma(\star_2;\star_{n-1},I)\in\calp_n,\]
satisfying Eq.~\meqref{eq:starn}. Moreover,
\begin{equation}\mlabel{eq:ass}
\gamma(\star_2;\star_i,\star_{n-i})=\star_n,
\end{equation}
for all $i=0,1,\dots,n$ with $n\geq0$. Then we call an operad $(\calp,\gamma,I)$ {\bf associative}, if it has such an associative operation $\star:=\star_2\in\calp_2$.
\end{lemma}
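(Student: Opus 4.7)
The plan is to prove both assertions by induction, using only operad associativity, the unit relations in Eq.~\meqref{eq:star2}, and the hypothesis $\gamma(\star_2;\star_2,I)=\gamma(\star_2;I,\star_2)$. The braided structure plays no role, since $\star_n$ is a specific element of $\calp_n$ and all identities take place inside $\calp$ itself.

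Before the main inductions I would isolate a preliminary ``$\cali$-absorption'' identity: for every $\mu\in\calp'$,
\[\gamma(\star_2;\cali,\mu)=\mu=\gamma(\star_2;\mu,\cali).\]
This follows by applying operad associativity to $\gamma(\gamma(\star_2;\cali,I);\mu)$ and $\gamma(\gamma(\star_2;I,\cali);\mu)$ and then invoking Eq.~\meqref{eq:star2} together with $\gamma(I;\mu)=\mu$.

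Next, I would prove Eq.~\meqref{eq:starn} by induction on $n$, with base case $n=2$ being Eq.~\meqref{eq:star2} itself. For the inductive step, feed the inputs $(I,\dots,\cali,\dots,I)$ with $\cali$ in the $i$-th slot into $\star_n=\gamma(\star_2;\star_{n-1},I)$ and use operad associativity to rewrite this as $\gamma(\star_2;\gamma(\star_{n-1};\cdots),\gamma(I;\cdots))$. For $i<n$, the inductive hypothesis collapses the inner $\gamma(\star_{n-1};\cdots)$ to $\star_{n-2}$, so the full expression becomes $\gamma(\star_2;\star_{n-2},I)=\star_{n-1}$. For $i=n$, the expression reduces to $\gamma(\star_2;\star_{n-1},\cali)$, which equals $\star_{n-1}$ by the preliminary identity.

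For Eq.~\meqref{eq:ass}, I would first establish the ``left-recursion'' $\gamma(\star_2;I,\star_{n-1})=\star_n$ by induction on $n$: expand $\star_{n-1}=\gamma(\star_2;\star_{n-2},I)$ on the right, apply operad associativity so that the two outer copies of $\star_2$ appear as $\gamma(\star_2;I,\star_2)$, swap this to $\gamma(\star_2;\star_2,I)$ via the associativity hypothesis, reassemble, and invoke the inductive hypothesis. Given left-recursion, Eq.~\meqref{eq:ass} follows by induction on $i$ (for fixed $n$): the cases $i\in\{0,n\}$ are immediate from the preliminary identity, and the step from $i-1$ to $i$ converts $\gamma(\star_2;\star_i,\star_{n-i})$ into $\gamma(\star_2;\star_{i-1},\gamma(\star_2;I,\star_{n-i}))=\gamma(\star_2;\star_{i-1},\star_{n-i+1})$ via the same associativity trick followed by left-recursion. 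The only real obstacle is the bookkeeping around the 0-ary $\cali$-slots produced by each reduction: one must confirm at every step that the unit action is defined on the composition in question and compatible with operad associativity. With the preliminary identity secured, the remaining inductions are routine.
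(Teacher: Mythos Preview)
Your argument is correct and follows essentially the same route as the paper's proof: both establish Eq.~\meqref{eq:starn} by induction on $n$, splitting into the cases $i<n$ (inner induction hypothesis) and $i=n$ (unit absorption via Eq.~\meqref{eq:star2}), and both reduce Eq.~\meqref{eq:ass} to the shift identity $\gamma(\star_2;\star_i,\star_{n-i})=\gamma(\star_2;\star_{i-1},\star_{n-i+1})$ obtained from operad associativity together with the hypothesis $\gamma(\star_2;\star_2,I)=\gamma(\star_2;I,\star_2)$.

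The only organizational difference is that you isolate ``left-recursion'' $\gamma(\star_2;I,\star_{m})=\star_{m+1}$ as a standalone preliminary lemma and then induct on $i$ for fixed $n$, whereas the paper runs a single induction on $n$ and invokes the induction hypothesis (Eq.~\meqref{eq:ass} for the smaller value $n-i+1<n$) to justify $\gamma(\star_2;I,\star_{n-i})=\star_{n-i+1}$ inline. Both packagings yield the same chain of equalities, and your explicit separation of the $\cali$-absorption and left-recursion steps makes the logical dependencies slightly clearer.
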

\begin{proof}
First we inductively prove that $\star_n,\,n\geq0$, satisfy Eq.~\meqref{eq:starn}. It is clear when $n=1,2$. If $n>2$, then by the definition of $\star_n$ and the induction hypothesis we have
\begin{align*}
\gamma(\star_n;I,\dots,\stackrel{i\text{ th}}{\vspace{3mm}\cali},\dots,I)
&=\gamma(\gamma(\star_2;\star_{n-1},I);I,\dots,\stackrel{i\text{ th}}{\vspace{3mm}\cali},\dots,I)\\
&=\begin{cases}
\gamma(\star_2;\gamma(\star_{n-1};I,\dots,\cali,\dots,I),I),&1\leq i<n,\\
\gamma(\star_2;\gamma(\star_{n-1};I,\dots,I),\cali),&i=n,
\end{cases}\\
&=\begin{cases}
\gamma(\star_2;\star_{n-2},I),&1\leq i<n,\\
\gamma(\star_2;\star_{n-1},\cali),&i=n,
\end{cases}\\
&=\begin{cases}
\gamma(\star_2;\star_{n-2},I),&1\leq i<n,\\
\gamma(\gamma(\star_2;I,\cali);\star_{n-1}),&i=n,
\end{cases}\\
&=\star_{n-1}.
\end{align*}
Further, Eq.~\meqref{eq:ass} can also be proved by induction on $n$. Indeed, it is obviously true when $n=0,1,2$. If $n>2$, then
\begin{align*}
\gamma(\star_2;\star_i,\star_{n-i}) &=\gamma(\star_2;\gamma(\star_2;\star_{i-1},I),\star_{n-i})\\
&=\gamma(\gamma(\star_2;\star_2,I);\star_{i-1},I,\star_{n-i})\\
&=\gamma(\gamma(\star_2;I,\star_2);\star_{i-1},I,\star_{n-i})\\
&=\gamma(\star_2;\star_{i-1},\gamma(\star_2;I,\star_{n-i}))\\
&=\gamma(\star_2;\star_{i-1},\star_{n+1-i}),
\end{align*}
for any $i=2,\dots,n-1$. It then follows that
\[\star_n=\gamma(\star_2;\star_n,\cali)=\gamma(\star_2;\star_{n-1},I)
=\cdots=\gamma(\star_2;\cali,\star_n).\]
Hence, Eq.~\meqref{eq:ass} holds for any $n\geq2$.
\end{proof}

\begin{exam}\mlabel{ex:denua}
Let $\calp_\cald$ be the braided dendriform operad given in Example \mref{ex:den}. There is a unit action on $\calp_\cald$ defined as follows.
First extend $\calp_\cald$ to be $\calp'_\cald$ such that
\[\calp'_\cald(0)=\bk\cali,\,\calp'_{\cald,1}=\bk I,\,\calp'_{\cald,2}=\bk\prec\oplus\,\bk\succ,\]
and then extend $\gamma$ to $\calp'_\cald$ such that
\begin{align*}
&\gamma(\prec;I,\cali)=I,\,\gamma(\prec;\cali,I)=0,\\
&\gamma(\succ;I,\cali)=0,\,\gamma(\succ;\cali,I)=I.
\end{align*}
Let $\star_0:=\cali,\,\star_1:=I,\,\star_2:=\prec+\succ$ and $\star_n:=\gamma(\star_2;\star_{n-1},I)$ inductively for any $n\geq3$. Then $\star_2$ clearly satisfies Eq.~\meqref{eq:star2}.
By the relations of $\calp'_\cald$, we know that $\star_2$ is also associative.
According to Lemma~\mref{le:ass}, the sequence of operations $\{\star_n\}_{n\geq0}$ satisfies Eqs.~\meqref{eq:starn} and
\meqref{eq:ass}.
\end{exam}

Now for the braided dendriform operad case, we further have the following property.
\begin{prop}\mlabel{prop:den-coherent}
The stated unit action on $\calp_\cald$ generated by $M_2=\{\prec,\succ\}$ is coherent.
\end{prop}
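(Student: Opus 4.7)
The plan is to verify directly that the operations $\prec$, $\succ$ defined by Eq.~\meqref{eq:tpp-alg} on $A \boxtimes B$ satisfy the three dendriform identities from Example~\mref{ex:den}, together with the braided compatibility conditions \meqref{bda1}--\meqref{bda2} for the induced braiding $\sigma_{A,B}$. A useful preliminary observation is that the unit action $\gamma(\prec; I, \cali) = I$, $\gamma(\prec; \cali, I) = 0$, $\gamma(\succ; I, \cali) = 0$, $\gamma(\succ; \cali, I) = I$ turns $A^+$ and $B^+$ into ``unital braided dendriform algebras'' in which $1$ satisfies $1 \prec a = 0$, $a \prec 1 = a$, $1 \succ a = a$, $a \succ 1 = 0$, with the braiding extended trivially on $\bk$.

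By multilinearity, it then suffices to check the identities on pure tensors $x_i = a_i \otimes b_i$ with $(a_i, b_i) \neq (1, 1)$ for $i = 1, 2, 3$. I would expand both sides of each dendriform identity via two iterated applications of Eq.~\meqref{eq:tpp-alg}. The resulting expressions are sums of terms of the form $\star_3(a'_1, a'_2, a'_3) \otimes p(b'_1, b'_2, b'_3)$ or $p(a'_1, a'_2, a'_3) \otimes \star_3(b'_1, b'_2, b'_3)$ obtained after applying a composite braided shuffle to $(a_1, b_1, a_2, b_2, a_3, b_3)$. Three key ingredients then yield the identity: (a) the relation $\gamma(\star_2; \star_i, \star_{n-i}) = \star_n$ from Lemma~\mref{le:ass} and Example~\mref{ex:denua}, which collapses the $A^+$-side of each expansion into a single $\star_3$; (b) the dendriform identities in $B^+$ (extended by the unit action), which equate the two sides on the $B^+$-factor; and (c) the braided compatibilities \meqref{bda1}--\meqref{bda2} of $\prec$, $\succ$ in $A$ and $B$, which let us commute the inner braidings through the operations. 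The conditions \meqref{bda1}--\meqref{bda2} for $\prec$, $\succ$ on $A \boxtimes B$ are verified analogously, with the added input that $\star$ is itself braided-compatible with $\sigma_{A, B}$ (a consequence of the associativity of $\star_2$ and the axioms of $\calc$).

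The main obstacle is the combinatorial book-keeping of the braided shuffles. The iterated applications of Eq.~\meqref{eq:tpp-alg} involve compositions of $w_2$-shuffles acting on nested subsets of positions, and one must show that these compositions agree with the single $w_3$-shuffle that reorders $(a_1, b_1, a_2, b_2, a_3, b_3)$ into the $a$'s-first arrangement (up to the braiding-induced coefficients). This is an identity in the braid group $\BB_6$ whose validity on $(A^+ \oplus B^+)^{\otimes 6}$ rests on the Yang-Baxter equation for $\sigma$ coming from the braided tensor category $\calc$. Once this shuffle identity is in place, the three dendriform axioms on $A \boxtimes B$ decouple cleanly into the corresponding axioms on the $A^+$- and $B^+$-sides, whose validity is guaranteed by the unital braided dendriform structures of $A^+$ and $B^+$.
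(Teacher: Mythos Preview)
Your approach is correct and essentially the same as the paper's: the paper states that it suffices to check that the operations $\prec$ and $\succ$ on $A\boxtimes B$ defined by Eq.~\meqref{eq:tpp-alg} satisfy the dendriform relations and the braided compatibility conditions \meqref{bda1}--\meqref{bda2}, and then defers the computational details to \cite[Proposition~4.9]{GL0}. One small correction: the second displayed form you mention, $p(a'_1,a'_2,a'_3)\otimes\star_3(b'_1,b'_2,b'_3)$, does not actually arise from Eq.~\meqref{eq:tpp-alg}; the ``all $b_i=1$'' branch instead contributes terms of the shape $p(a_1,a_2,a_3)\otimes 1$, so the case analysis over which of $b_1,b_2,b_3$ lie in $B$ versus $\bk$ is the main bookkeeping to track.
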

\begin{proof}
It is enough to check that the operations $\prec$ and $\succ$ on $A\boxtimes B$ defined by Eq.~\meqref{eq:tpp-alg} satisfy the dendriform relations and the compatibility conditions \meqref{bda1}, \meqref{bda2} for any $\calp_\cald$-algebras $A,B$. The proof is the same as the case of the algebraic operads proved in~\cite[Proposition~4.9]{GL0} to which we refer the reader for details.
\end{proof}

Let $\calp$ be a (braided) nonsymmetric operad equipped with a coherent unit action and
$A^+:=\bk\oplus A$ be a unitary $\calp$-algebra. Do the same for $A^+\otimes A^+\cong(A\boxtimes A)^+$ via Eq.~\meqref{eq:tpp-alg}.
Let $\Delta:A^+\rar (A\boxtimes A)^+\cong A^+\otimes A^+$ be a coassociative linear map, such that
$\Delta(1)=1\otimes 1$ and $\Delta'(a):=\Delta_A(a)-a\otimes 1-1\otimes a\in A\otimes A$ for any $a\in A$.

If $\Delta$ is a morphism of unitary $\calp$-algebras, we call $A^+$ together with $\Delta$ a {\bf $\calp$-bialgebra}.
Moreover, if there is a direct sum $A^+=\bigoplus_{n\geq0}A^{(n)}$ of vector spaces, such that $A^{(0)}=\bk$ and
\[\Delta(A^{(n)})\subseteq \sum_{i=0}^n A^{(i)}\otimes A^{(n-i)}\]
for all $n\geq0$, we call $A^+$ a {\bf connected graded $\calp$-Hopf algebra}.

As a braided analogue of \cite[Lemma~6]{Hol2}, we provide the following theorem.

\begin{theorem}\mlabel{thm:p-alg}
Let $\calp$ be a braided nonsymmetric operad equipped with a coherent unit action and let
$A^+:=\bk\oplus A$ with $A:=\calp(V)$ be the free unitary $\calp$-algebra generated by a braided vector space $V$. Then there is a coassociative morphism $\Delta_A:A^+\rar A^+\otimes A^+$ of $\calp$-algebras defined by
\[\Delta_A(x)=x\otimes 1+1\otimes x,\,x\in V,\]
which provides $A^+$ with a connected graded $\calp$-bialgebra algebra structure. It further provided $A^+$ with a connected braided bialgebra structure and hence a Hopf algebra structure.

Furthermore, if the Yang-Baxter operator $\sigma_A$ on $A^+$ becomes a  $\calp$-algebra isomorphism, then $\Delta_A$ is twisted cocommutative, i.e. $\sigma_A\Delta_A=\Delta_A$.
\end{theorem}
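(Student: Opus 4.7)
The plan is to construct $\Delta_A$ by the universal property of the free $\calp$-algebra $A=\calp(V)$ and then derive all further properties by repeated appeals to this universal property. The coherence of the unit action is what makes this work: it promotes $A^+\otimes A^+\cong (A\boxtimes A)^+$ to a unitary $\calp$-algebra via Eq.~\meqref{eq:tpp-alg}, and by iterating $\boxtimes$ every tensor power $(A^+)^{\otimes n}$ becomes a unitary $\calp$-algebra as well.

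First I would observe that the linear map $V\to A^+\otimes A^+,\ x\mapsto x\otimes 1+1\otimes x$ is a morphism of braided vector spaces, using Eq.~\meqref{ba2} to see that the braiding swaps the two summands. Freeness of $A=\calp(V)$ then produces a unique $\calp$-algebra morphism $A\to A^+\otimes A^+$, which extends to a unitary $\calp$-algebra morphism $\Delta_A:A^+\to A^+\otimes A^+$ by sending $1\mapsto 1\otimes 1$. Coassociativity is an immediate corollary of this same uniqueness: both $(\id\otimes\Delta_A)\Delta_A$ and $(\Delta_A\otimes\id)\Delta_A$ are morphisms of unitary $\calp$-algebras $A^+\to (A^+)^{\otimes 3}$ which agree on each $x\in V$ (both send it to $x\otimes 1\otimes 1+1\otimes x\otimes 1+1\otimes 1\otimes x$), so they must coincide. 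For the grading, I take the arity decomposition $A^{(n)}:=\calp(n)\otimes_{\BB_n}V^{\otimes n}$ with $A^{(0)}=\bk$, and inspect Eq.~\meqref{eq:tpp-alg}: in every case the arity $n$ splits additively between the two tensor slots (the left slot absorbs $\star_n$, the right one absorbs $p$), which gives $\Delta_A(A^{(n)})\subseteq\sum_{i=0}^{n}A^{(i)}\otimes A^{(n-i)}$ and establishes the connected graded $\calp$-bialgebra structure.

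Next I would upgrade this to a braided Hopf algebra. The associative operation $\star=\star_2$ provided by the coherent unit action (cf.\ Lemma~\mref{le:ass}) turns $A^+$ into an associative unital algebra. Since $\Delta_A$ is a $\calp$-algebra morphism, $\Delta_A\circ\star=\star_{A^+\otimes A^+}\circ(\Delta_A\otimes\Delta_A)$; specializing Eq.~\meqref{eq:tpp-alg} to $p=\star_2$ and noting that the permutation $w_2\in\BS_4$ is the transposition $s_2$, so $T^\sigma_{w_2}=\sigma_2$, yields $\star_{A^+\otimes A^+}=(\star\otimes\star)\sigma_2$, which is precisely the compatibility Eq.~\meqref{eq:comp}. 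The braided coalgebra axioms~\meqref{bc1}--\meqref{bc2} follow from $\Delta_A$ being a morphism of braided vector spaces, and connectedness of the grading allows the standard recursive construction of an antipode in the braided setting to produce the braided Hopf algebra structure. For the final assertion, if $\sigma_A$ is a $\calp$-algebra isomorphism on $A^+\otimes A^+$, then $\sigma_A\Delta_A$ is a $\calp$-algebra morphism $A^+\to A^+\otimes A^+$ that agrees with $\Delta_A$ on each $x\in V$ by Eq.~\meqref{ba2}; universality then forces $\sigma_A\Delta_A=\Delta_A$.

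The main obstacle I expect is the careful bookkeeping of the Yang-Baxter operator throughout. In the classical Holtkamp setting a mere flip does the job, but here one must check that the iterated $\calp$-algebra structure on $(A^+)^{\otimes 3}$ coming from $(A\boxtimes A)\boxtimes A$ and $A\boxtimes(A\boxtimes A)$ really agree, so that both iterated comultiplications are morphisms to the same $\calp$-algebra. This associativity of $\boxtimes$ up to canonical isomorphism is the braided analogue of the classical coherence check and uses the compatibility of the $\star_n$ via Eqs.~\meqref{eq:starn} and~\meqref{eq:ass}; once it is in place, every remaining step is a clean application of the universal property of $\calp(V)$.
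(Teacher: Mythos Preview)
Your proposal is correct and follows essentially the same route as the paper's proof: both construct $\Delta_A$ via the universal property of $\calp(V)$ applied to the braided map $x\mapsto x\otimes 1+1\otimes x$, deduce coassociativity by comparing two $\calp$-algebra morphisms into $(A^+)^{\otimes 3}$ on generators, use the arity grading for connectedness, and obtain twisted cocommutativity by the same universal-property argument. If anything, you are more explicit than the paper on two points it leaves implicit---the verification of the braided-bialgebra compatibility Eq.~\meqref{eq:comp} by specializing Eq.~\meqref{eq:tpp-alg} to $p=\star_2$ with $T^\sigma_{w_2}=\sigma_2$, and the need for the two bracketings of $(A^+)^{\otimes 3}$ via $\boxtimes$ to give the same $\calp$-algebra structure so that the coassociativity comparison is legitimate.
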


\begin{proof}
Since $\calp(V)$ is the free $\calp$-algebra on $V$ and $\calp(V)^{\boxtimes n}, n=2,3,$ are $\calp$-algebras by the coherent unit action, there is a unique $\calp$-algebra morphism
$$\Delta_A: \calp(V)\to \calp(V)^{\boxtimes 2}, v\mapsto 1\ot v +v\ot 1, v\in V, $$
identifying $V$ with $\calp_1\otimes V$. It then induces an braided (associative) algebra homomorphism
$\Delta: (\calp(V),\star) \to (\calp(V)^{\boxtimes 2}, \star).$
By the same argument, we obtain the compatibility requirement
\[\Delta_A\theta_A=\theta_{A\boxtimes A}\calp(\Delta_A),\]
where $\theta_A=\gamma_V$ and $\theta_{A\boxtimes A}$ are defined in Eq.~\meqref{eq:tpp-alg}.
Then it also guarantees the commutativity between $\Delta_A$ and the braidings,
\[(\Delta_A\otimes\Delta_A)\sigma_A=\beta_{22}(\Delta_A\otimes\Delta_A).\]

On the other hand, since
\[(\Delta_A\otimes\id_{A^+})\Delta_A(x)=x\otimes 1\otimes 1+1\otimes x\otimes 1+1\otimes 1\otimes x=(\id_{A^+}\otimes\Delta_A)\Delta_A(x)\]
for any $x\in V$, by the universal property of $\calp(V)$ again, $\Delta_A$ as a morphism of $\calp$-algebras is coassociative.
Also, $\Delta_A(1)=1\otimes 1$ and $\Delta'_A(a):=\Delta_A(a)-a\otimes 1-1\otimes a\in A\otimes A$ for any $a\in A$.
Furthermore, the free unitary $\calp$-algebra $A^+$ allows a grading $A^{(n)}:=\calp_n\otimes V^{\otimes n}$ which is respected by all $\calp$-operations. The same holds for $A^+\otimes A^+$. Thus $\Delta_A$ makes $A^+$ into a connected graded $\calp$-bialgebra algebra. Thus $(A^+,\star,\Delta)$ is a connected graded braided bialgebra and hence a braided Hopf algebra.

Finally, the stated cocommutativity of $\Delta_a$ follows from its definition.
\end{proof}

\begin{exam}\mlabel{ex:den-p-alg}
For the braided dendriform operad, by Proposition~\mref{prop:den-coherent} and Theorem~\mref{thm:p-alg}, we know that
the free unitary $\calp_\cald$-algebra over a braided vector space is a connected graded $\calp_\cald$-Hopf algebra. In particular, we recover the braided Hopf algebra structures of free braided dendriform algebras, namely, the braided analogue of the Loday-Ronco Hopf algebra of planar binary rooted trees, directly verified in \cite{GL0}.
\end{exam}

\begin{prop}
The conclusion of Theorem \mref{thm:p-alg} holds when the associativity condition stated in Lemma~\mref{le:ass} is replaced by a {\bf completely commutativity condition} for the braided  operad $(\calp,\gamma,I)$, in the sense that there are operations $\star_n\in\calp(n),\,n\geq0$, invariant under the right $\BB_n$-module action and satisfying Eqs.~\meqref{eq:starn} and \meqref{eq:star2}.
\mlabel{pp:p-comalg}
\end{prop}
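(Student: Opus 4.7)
The plan is to mirror the proof of Theorem~\mref{thm:p-alg}, with the key observation that the $\BB_n$-invariance of each operation $\star_n\in\calp(n)$ replaces the associativity machinery of Lemma~\mref{le:ass}. In Theorem~\mref{thm:p-alg} the nonsymmetric structure $\calp(n)=\calp_n\otimes\bk[\BB_n]$ furnished canonical representatives $\star_n\in\calp_n$ with which one could unambiguously build the boxed product; here the hypothesis $\star_n\cdot b=\star_n$ for all $b\in\BB_n$ plays exactly that role.

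First I would check that the coherent unit action construction $A\boxtimes B$ still defines a $\calp$-algebra in the present setting. The formula analogous to Eq.~\meqref{eq:tpp-alg}, namely
\[
p(a_1\otimes b_1,\dots,a_n\otimes b_n)=\sum \star_n(a'_1,\dots,a'_n)\otimes p(b'_1,\dots,b'_n)
\]
for $p\in\calp(n)$ (when at least one $b_i\in B$), must descend to $\calp(n)\otimes_{\BB_n}(A^+\otimes B^+)^{\otimes n}$. Replacing $p$ by $p\cdot b$ and rebraiding the arguments $a_i\otimes b_i$ accordingly transports the factor $p(b'_1,\dots,b'_n)$ into itself (by the definition of $\otimes_{\BB_n}$) and simultaneously permutes the arguments of $\star_n$ by the same braid; the completely commutativity condition $\star_n\cdot b=\star_n$ then ensures that the whole expression is unchanged. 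The two remaining cases of Eq.~\meqref{eq:tpp-alg} are unaffected. The verification that the resulting $\theta_{A\boxtimes B}$ satisfies the operad axioms then runs parallel to the nonsymmetric case, using only Eqs.~\meqref{eq:starn}--\meqref{eq:star2} together with the $\BB_n$-invariance.

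Once $A\boxtimes B$ is a bona fide $\calp$-algebra, the remainder of Theorem~\mref{thm:p-alg} transfers verbatim. The universal property of $A=\calp(V)$ yields a unique $\calp$-algebra morphism $\Delta_A:A^+\rar(A\boxtimes A)^+\cong A^+\otimes A^+$ with $\Delta_A(x)=x\otimes 1+1\otimes x$ on $x\in V$; coassociativity follows by applying the universal property to the $\calp$-algebra $\calp(V)^{\boxtimes 3}$ and noting that $(\Delta_A\otimes\id)\Delta_A$ and $(\id\otimes\Delta_A)\Delta_A$ both send $x\mapsto x\otimes 1\otimes 1+1\otimes x\otimes 1+1\otimes 1\otimes x$ on generators. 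The compatibility $(\Delta_A\otimes\Delta_A)\sigma_A=\beta_{22}(\Delta_A\otimes\Delta_A)$ with the braiding is obtained in the same way, and the grading $A^{(n)}=\calp(n)\otimes_{\BB_n}V^{\otimes n}$ is preserved by $\Delta_A$. Connectedness supplies the antipode recursively, yielding the desired braided Hopf algebra structure.

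The main obstacle is the well-definedness step in the second paragraph above: checking that the boxed-product formula descends from $\calp(n)\otimes(A^+\otimes B^+)^{\otimes n}$ to $\calp(n)\otimes_{\BB_n}(A^+\otimes B^+)^{\otimes n}$ is the only place where the passage from associativity to completely commutativity enters substantively, and it is precisely here that the $\BB_n$-invariance of $\star_n$ is indispensable. Beyond this point no additional interaction with the braidings $\sigma_\calp$ is required, and the argument is an exact parallel of the proof of Theorem~\mref{thm:p-alg}.
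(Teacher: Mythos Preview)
Your proposal is correct and follows essentially the same approach as the paper's proof: the paper also observes that the completely commutativity condition makes the structure map $\theta_{A\boxtimes B}$ well-defined on $\calp(A\boxtimes B)=\bigoplus_{n\geq0}\calp(n)\otimes_{\BB_n}(A\boxtimes B)^{\otimes n}$, so that $A\boxtimes B$ is again a $\calp$-algebra (i.e., the unit action is coherent), and then simply invokes the argument of Theorem~\mref{thm:p-alg}. Your write-up is in fact more explicit than the paper's about \emph{why} the $\BB_n$-invariance of $\star_n$ is precisely what is needed for Eq.~\meqref{eq:tpp-alg} to descend to the quotient by $\BB_n$, but the underlying strategy is identical.
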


\begin{proof}
Under this condition, $\calp$-operations on $A\boxtimes B$ are still well-defined via Eq.~\meqref{eq:tpp-alg}, thus again endow $A\boxtimes B$ with a $\calp$-algebra structure given by the same $\theta_{A\boxtimes B}$ from $\calp(A\boxtimes B)=\bigoplus_{n\geq0}\calp(n)\otimes_{\BB_n}(A\boxtimes B)^{\otimes n}$ to $A\boxtimes B$ in Eq.~(\mref{eq:tpp-alg}). In short, there is again a coherent unit action on $\calp$. Then Proposition~\mref{pp:p-comalg} follows from the same argument.
\end{proof}

\begin{exam}
Typical examples of completely commutative (braided) operads with coherent unit actions are the (braided) commutative dendriform (identified with Zinbiel) operad and the braided commutative tridendriform (CTD) operad.
Thus Proposition~\mref{pp:p-comalg} in particular implies the braided Hopf algebra structures on free braided commutative (tri)dendriform algebras. Free braided commutative dendriform algebras are shown~\mcite{GL} to be isomorphism to the quantum shuffle algebras defined in \mcite{Ro} for involutive braidings. Thus the coherent unit action approach of braided Hopf algebras in Proposition~\mref{pp:p-comalg} puts the braided Hopf algebra structure on quantum shuffle algebras in a broader context. A similar approach can be given to the quantum quasi-shuffle algebra in~\mcite{JRZ}.
\end{exam}

\section{Classification of braided binary quadratic nonsymmetric operads with coherent unit actions}\mlabel{sec:clas}
In \mcite{EG} the authors classified all the associative, binary, quadratic and nonsymmetric (ABQR) operads with coherent unit actions. In this section we extend this classification to the braided context.

\begin{theorem}
Let $(\calp,\gamma,I,\star)$ be a braided ABQR operad.
A unit action on $\calp$ is coherent
if and only if, for every $(\sum_i\dfoa_i\otimes \dfob_i,\sum_j\dfoc_j\otimes \dfod_j)\in\calp_2^{\otimes 2}\oplus\calp_2^{\otimes 2}$ from the quadratic relations of $\calp$,
the following {\bf coherence equations} hold in terms of the linear maps $\alpha,\beta$ defined in Eq.~\meqref{eq:ua}.
\begin{enumerate}
\item[\bf{(C1)}] $\sum_i \beta(\dfoa_i)\dfob_i = \sum_j \beta(\dfoc_j)\dfod_j,$
\item[\bf{(C2)}] $\sum_i \alpha(\dfoa_i)\dfob_i  =\sum_j \beta(\dfod_j) \dfoc_j, $
\item[\bf{(C3)}] $\sum_i \alpha(\dfob_i)\dfoa_i =\sum_j \alpha(\dfod_j) \dfoc_j,$
\item[\bf{(C4)}] $\sum_i \beta(\dfob_i)\dfoa_i=\sum_j \beta(\dfoc_j)\beta(\dfod_j) \star, $
\item[\bf{(C5)}] $\sum_i \alpha(\dfoa_i)\alpha(\dfob_i) \star =
    \sum_j \alpha(\dfoc_j) \dfod_j.$
\end{enumerate}
\label{thm:coherent}
\end{theorem}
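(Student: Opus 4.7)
The plan is to reduce coherence of the unit action to the system (C1)--(C5) by testing the quadratic relations of $\calp$ on elementary tensors in $A\boxtimes B$. Since $\calp$ is binary, quadratic and nonsymmetric, coherence amounts to requiring that every quadratic relation, written as $\sum_i\gamma(\dfoa_i;\dfob_i,I)=\sum_j\gamma(\dfoc_j;I,\dfod_j)$ in $\calp(3)$, be satisfied by the induced $\calp$-operations on $A\boxtimes B$ for any pair of $\calp$-algebras $A,B$. Evaluating the two sides on elementary triples $(x_1,x_2,x_3)$ with $x_k=a_k\otimes b_k\in A^+\otimes B^+$ then transforms the coherence condition into a finite list of scalar identities in $\calp_2$.

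Next I would enumerate the essentially distinct configurations of which $a_k$'s and $b_k$'s equal $1$. When all $b_k=1$ (resp.\ all $a_k=1$) the identity collapses by the second clause of Eq.~\meqref{eq:tpp-alg} to the quadratic relation already known in $A^+$ (resp.\ $B^+$), and is therefore automatic. Moreover, by the unit-compatibility Eq.~\meqref{eq:starn} together with Lemma~\mref{le:ass}, several further configurations reduce to these trivial ones. The irreducible mixed configurations turn out to be exactly five, matched bijectively with (C1)--(C5). In each such configuration I would expand both $\sum_i\dfoa_i(\dfob_i(x_1,x_2),x_3)$ and $\sum_j\dfoc_j(x_1,\dfod_j(x_2,x_3))$ via Eq.~\meqref{eq:tpp-alg}, and apply Eq.~\meqref{eq:ua} each time a binary operation is fed a pair with a unit argument; each side then collapses to a scalar multiple of a single tensor of the form $\star_n\otimes p$ for some $n\in\{2,3\}$ and $p\in\calp_2$, and equating coefficients produces the corresponding equation.

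The braiding intervenes only through the operator $T^\sigma_{w_n}$ in Eq.~\meqref{eq:tpp-alg} that rearranges $(a_k\otimes b_k)_k$ into $(a_k)\otimes(b_k)$. By the unital braiding axiom Eq.~\meqref{ba2}, every unit factor slides through the braid without contributing a scalar, and the remaining action on the nontrivial components is absorbed into the braided algebra structures of $A$ and $B$ via Eq.~\meqref{ba1}. As a result, the scalar identity extracted from each mixed configuration is independent of $\sigma$, explaining why (C1)--(C5) take exactly the same form as their classical counterparts in \mcite{EG}.

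The main obstacle is the combinatorial bookkeeping. On the one hand, I must confirm that the five mixed configurations enumerated above really account for all independent identities, so that no further coherence equations arise; on the other hand, in each configuration I must verify that $\alpha$ and $\beta$ attach to precisely the right elements among $\dfoa_i,\dfob_i,\dfoc_j,\dfod_j$ to match the given equations, including the appearance of the extra $\star$ on the right-hand sides of (C4) and (C5). The braided setting adds the verification that the braid element $w_n$ never entangles the $A$- and $B$-components in a way that could generate a $\sigma$-dependent correction, which follows from repeated applications of Eqs.~\meqref{ba1} and \meqref{ba2} together with the fact that $A,B$ are themselves braided $\calp$-algebras.
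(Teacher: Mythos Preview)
Your approach is essentially the paper's: both reduce coherence to checking that each quadratic relation holds on elementary tensors of $A\boxtimes B$, then split into cases according to which factors are units, and both observe that the braiding contributes nothing new because units pass through $\sigma$ trivially.

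One bookkeeping correction is worth flagging. Because Eq.~\meqref{eq:tpp-alg} is asymmetric in $A$ and $B$ (the case split depends only on whether the $b_k$ lie in $B$ or in $\bk$), the relevant enumeration is over the eight subsets of $\{b,b',b''\}$ lying in $\bk$, not over which $a_k$'s and $b_k$'s equal $1$. After removing the two automatic cases (all $b_k\in\bk$ or all $b_k\in B$), \emph{six} nontrivial cases remain, not five. Five of them correspond bijectively to (C1)--(C5) as you expect, but the sixth case ($b,b''\in\bk$, $b'\in B$) does not reduce to a trivial configuration; it produces a further identity which the paper notes is a \emph{consequence} of (C1). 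So your claim of a bijection between irreducible configurations and (C1)--(C5) is slightly off, and a complete argument must verify that this sixth identity is implied by the others rather than assuming it collapses away.
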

\begin{proof}
A braided ABQR operad $\calp$ is generated by
$\calp_1=\bk I,\,\calp_2=\bk\omega$
with relations
\[\sum_i\gamma(\dfoa_i;\dfob_i,I)=\sum_j\gamma(\dfoc_j;I,\dfod_j),\]
where $(\sum_i\dfoa_i\otimes \dfob_i,\sum_j\dfoc_j\otimes \dfod_j)\in\calp_2^{\otimes 2}\oplus\calp_2^{\otimes 2}$.

A unit action on $\calp$ is coherent
if and only if, for any $\calp$-algebra $A$ and $B$ and for any $a,a',a''\in A^+$ and $b,b',b''\in B^+$, we have the equation
\begin{equation}
\sum_i \left((a\ot b) \dfoa_i (a'\ot b')\right) \dfob_i (a''\ot b'')
    = \sum_j (a\ot b) \dfoc_j \left( (a'\ot b')\dfod_j (a''\ot b'')\right).
\label{eq:coh}
\end{equation}
Then there are eight mutually disjoint cases for eight subsets of
$b,b',b''$ which are in $\bfk$ (and hence not in $B$). Note that when all of $b,b',b''$ are in $\bk$ or all of $b,b',b''$ are in $B$, Eq. (\ref{eq:coh}) just means that $(\sum_i \dfoa_i\otimes
\dfob_i,\sum_j \dfoc_j\ot \dfod_j)$ is a relation for $\calp$, so is
automatic true. Thus to prove the theorem we only need to prove

\begin{enumerate}
\item[] Case 1.  Eq. (\ref{eq:coh}) holds for $b\in\bk$, $b',b''\in B$ if and only
if {\bf(C1)} is true;
\item[] Case 2. Eq. (\ref{eq:coh}) holds for $b'\in\bk$, $b,b''\in B$ if and only
if {\bf(C2)} is true;
\item[] Case 3. Eq. (\ref{eq:coh}) holds for $b''\in\bk$, $b,b'\in B$ if and only
if {\bf(C3)} is true;
\item[] Case 4. Eq. (\ref{eq:coh}) holds for $b,b'\in\bk$, $b''\in B$ if and only
if {\bf(C4)} is true;
\item[] Case 5. Eq. (\ref{eq:coh}) holds for $b',b''\in\bk$, $b\in B$ if and only
if {\bf(C5)} is true;
\item[] Case 6. Eq. (\ref{eq:coh}) holds for $b,b''\in\bk$, $b'\in B$ if
{\bf(C1)} is true.
\end{enumerate}
All these cases are routinely checked as in the original paper \mcite{EG} using binary operations on $A\boxtimes B$ defined in Eq.~\meqref{eq:tpp-alg}.
\end{proof}

We now apply Theorem~\ref{thm:coherent} to classify all braided ABQR operads with coherent
unit actions.
\begin{theorem} Let $(\calp,\gamma,I,\star)$ be a braided ABQR operad of dimension $n$
(that is, $\dim\calp_2=n$).
\begin{enumerate}
\item There is a coherent unit action $(\alpha,\beta)$ on $\calp$ with
$\alpha\neq \beta$ if and only if there is
a basis $\{ \dfop_i\}$ of $\calp_2$ with $\star=\sum_i \dfop_i$ such that
the space $\Lambda$ of quadratic relations for $\calp$
are contained in the subspace $\Lambda_{n,\,\coh}'$ of
$\calp_2^{\ot 2} \oplus \calp_2^{\ot 2}$ with the basis
\begin{equation}
\lambda'_{n,\,\coh} := \left\{ \begin{array}{l}
(\star \ot \dfop_2, \dfop_2\ot \dfop_2), \\
 (\dfop_1 \ot \dfop_1, \dfop_1 \ot \star), \\
 (\dfop_i\ot \dfop_1,  \dfop_i\ot \dfop_1),\ 2\leq i\leq n, \\
 (\dfop_2 \ot \dfop_j, \dfop_2\ot \dfop_j),\ 3\leq j\leq n, \\
 (\dfop_1\ot \dfop_i, \dfop_i \ot \dfop_2),\ 3\leq i\leq n,\\
 (\dfop_i\ot \dfop_j, 0), \
 (0, \dfop_i \ot \dfop_j),\ 3\leq i,j\leq n.
\end{array} \right\}
\mlabel{eq:an=b}
\end{equation}
\mlabel{it:cohneq}
\item There is a coherent unit action $(\alpha,\beta)$ on $\calp$ with
$\alpha= \beta$ if and only if there is a basis $\{ \dfop_i\}$ of $\calp_2$
with $\star=\sum_i \dfop_i$ such that the space $\Lambda$ of quadratic relations for $\calp$
are contained in the subspace $\Lambda_{n,\,\coh}''$ of
$\calp_2^{\ot 2} \oplus \calp_2^{\ot 2}$ with the basis
\begin{equation}
\lambda_{n,\,\coh}'':= \left \{ \begin{array}{l}
(\dfop_1 \ot \star, \dfop_1\ot\star) +(\star\ot \dfop_1, \star \ot \dfop_1)
 - (\dfop_1 \ot \dfop_1, \dfop_1 \ot \dfop_1), \\
 (\dfop_i\ot \dfop_j, 0), \
 (0, \dfop_i \ot \dfop_j),\ 2\leq i,j\leq n.
\end{array} \right \}.
\mlabel{eq:a=b}
\end{equation}
\mlabel{it:coheq}
\end{enumerate}
\mlabel{thm:crel}
\end{theorem}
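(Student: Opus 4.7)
The plan is to translate the coherence equations (C1)--(C5) of Theorem~\mref{thm:coherent} into explicit linear conditions on the coefficients of a generic element $(A,B)\in\calp_2^{\otimes 2}\oplus \calp_2^{\otimes 2}$ expanded in a basis of $\calp_2$ chosen to diagonalize the functionals $\alpha,\beta:\calp_2\to\bk$. A crucial preliminary observation is that (C1)--(C5) are purely scalar conditions and involve no braiding data whatsoever, so the analysis parallels the non-braided classification of~\mcite{EG} almost verbatim; only the abstract meaning of ``relation'' has changed. Since by Lemma~\mref{le:ass} both $\alpha$ and $\beta$ take value $1$ on $\star$, the natural dichotomy driving parts (i) and (ii) is whether $\alpha$ and $\beta$ are linearly independent or equal as functionals on $\calp_2$.

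For part (i), under the hypothesis $\alpha\neq\beta$, I first pick $\dfop_1,\dfop_2$ with $\alpha(\dfop_1)=\beta(\dfop_2)=1$ and $\alpha(\dfop_2)=\beta(\dfop_1)=0$, then extend by a basis $\dfop_3,\ldots,\dfop_n$ of $\ker\alpha\cap\ker\beta$. Exploiting the freedom to shift $\dfop_1,\dfop_2$ by elements of this kernel, I arrange the normalization $\star=\sum_i\dfop_i$. Writing $A=\sum A_{ij}\,\dfop_i\otimes\dfop_j$ and similarly for $B$, the five coherence equations become $A_{2j}=B_{2j}$, $A_{1j}=B_{j2}$, $A_{i1}=B_{i1}$, $A_{i2}=B_{22}$, and $B_{1j}=A_{11}$ for all $i,j$. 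A direct count, after accounting for the few overlaps among these five families, shows the solution space has the same dimension as the span of $\lambda'_{n,\coh}$; inspection of each spanning vector listed in~\meqref{eq:an=b} against the constraints then identifies the two.

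For part (ii), with $\alpha=\beta$, I pick $\dfop_1$ with $\alpha(\dfop_1)=1$ and extend by a basis of $\ker\alpha$ chosen so that $\sum_{i\geq 2}\dfop_i=\star-\dfop_1$. In this basis, (C1)--(C5) collapse to the single chain of equalities $A_{11}=B_{11}=A_{1j}=A_{i1}=B_{1j}=B_{i1}$ for all $i,j$, leaving the ``interior'' $(n-1)\times(n-1)$ blocks of $A$ and $B$ entirely free; the resulting solution space is spanned precisely by $\lambda''_{n,\coh}$.

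The converse in each case is direct: once $\alpha,\beta$ are defined on the prescribed basis vectors according to the schemes above, every element of $\lambda'_{n,\coh}$ or $\lambda''_{n,\coh}$ can be checked by inspection to satisfy (C1)--(C5). The main technical subtlety I foresee is the basis construction step ensuring the normalization $\star=\sum_i\dfop_i$; this must be handled carefully when $\ker\alpha\cap\ker\beta$ (respectively $\ker\alpha$) is one-dimensional, but in all cases the shift freedom within the kernel cosets suffices to achieve the normalization.
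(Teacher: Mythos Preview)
Your proposal is correct and takes essentially the same approach as the paper. The paper's own proof consists of a single sentence referring the reader to~\cite[Theorem~4.9]{EG}, on the grounds that the argument is identical to the non-braided case; your key observation that the coherence equations (C1)--(C5) involve no braiding data is precisely what justifies that referral, and the basis-diagonalization argument you sketch is exactly the content of the cited result.
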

\begin{proof}
The proof is the same as the one for the algebraic operads in~\cite[Theorem~4.9]{EG}, so we refer the reader there for details.
\end{proof}

By inspection, the braided dendriform algebra (see Proposition~\mref{prop:den-coherent}) and braided tridendriform algebras have coherent unit actions.

\smallskip

\noindent
{\bf Acknowledgments.}
We thank the organizers for the hospitality during the International Workshop on Hopf Algebras and Tensor Categories held in Nanjing University, September 2019.
This work is supported by Natural Science Foundation of China (Grant Nos. 11501214, 11771142,  11771190).

\bibliographystyle{amsplain}

\end{document}